\newcommand{\ity}{\infty}
\newcommand{\C}{\mathbb{C}}
\newcommand{\N}{\mathbb{N}}
\numberwithin{equation}{section}
\newtheorem{theorem}{Theorem}[section]
\newtheorem{corollary}[theorem]{Corollary}
\theoremstyle{remark}
\thanks {The research work of the first  author is supported by research fellowship from Council of Scientific and Industrial Research (CSIR), New Delhi.}
\begin{document}
\title[dynamics and singularities]{On dynamics of composite entire functions and singularities}
\author[D. Kumar]{Dinesh Kumar}
\address{Department of Mathematics, University of Delhi,
Delhi--110 007, India}

\email{dinukumar680@gmail.com }
\author[S. Kumar]{Sanjay Kumar}

\address{Department of Mathematics, Deen Dayal Upadhyaya College, University of Delhi,
Delhi--110 007, India }

\email{sanjpant@gmail.com}

\begin{abstract}
We consider the dynamical properties of transcendental entire functions and their compositions. We give  several conditions under which Fatou set of a transcendental entire function $f$ coincide with that of $f\circ g,$ where $g$ is another transcendental entire function. We also prove some result giving relationship between singular values of transcendental entire functions and their compositions.
\end{abstract}
\keywords{Singular value, normal family, wandering domain, bounded type, permutable}

\subjclass[2010]{37F10, 30D05}

\maketitle

\section{Introduction}\label{sec1}
 Let $f$ be a transcendental entire function. For $n\in\N$ let $f^n$ denote the n-th iterate of $f.$ The set $F(f)=\{z\in\C : \{f^n\}_{n\in\N}\,\text{ is normal in some neighborhood of}\, z\}$ is called the Fatou set of $f$ or the set of normality of $f$ and its complement $J(f)$ is the Julia set of $f$.  The Fatou set is open and completely invariant: $z\in F(f)$ if and only if $f(z)\in F(f)$ and consequently $J(f)$ is completely invariant. The Julia set of a transcendental entire function is non empty, closed perfect set and unbounded. All these results and more can be found in Bergweiler \cite{berg1}. If $U$ is a component of Fatou set $F(f),$ then $f(U)$ lies in some component $V$ of $F(f)$ and  $V\setminus f(U)$ is a set which contains atmost one point, \cite{berg3}. This result was also proved in \cite{MH} independently. A component $U$ of  Fatou set of $f$ is called a wandering domain if $U_k\cap U_l=\emptyset$ for $k\neq l,$ where $U_k$ denotes the component of $F(f)$ containing $f^k(U),$ otherwise $U$ is called a preperiodic component of $F(f),$ $f^k(U_l)=U_l$ for some $k,l\geq 0.$ If $f^k(U)=U,$ for some $k\in\N,$ then $U$ is called a periodic component of $F(f).$ Sullivan \cite{Sullivan} proved that the Fatou set of any rational function has no wandering domain. It was Baker \cite{baker2} who gave the first example of an entire function having wandering domain. Thereafter  several other examples of wandering domains  have been given by various authors, see \cite{APS3}. Certain classes of transcendental functions which do not have wandering domains are also known, see ~\cite{baker3, baker7, berg1, berg2, el2, keen, stallard}.

Two functions $f$ and $g$ are called permutable if $f\circ g=g\circ f.$
Fatou  \cite{beardon}, proved that if $f$ and $g$ are two rational functions which are permutable, then $F(f)=F(g)$. This was the most famous result  that motivated the dynamics of composition of complex functions. Similar results for transcendental entire functions is still not known, though it holds in some very special cases,  \cite[Lemma 4.5]{baker3}.
If $f$ and $g$ are transcendental entire functions, then so is $f\circ g$ and $g\circ f$ and the dynamics of one composite entire function helps in the study of the dynamics of the other and vice-versa. 
If $f$ and $g$ are transcendental entire functions, the dynamics of $f\circ g$ and $g\circ f$ are very similar. In  ~\cite{berg4, Poon}, it was shown $f\circ g$ has  wandering domains if and only if $g\circ f$ has  wandering domains. In \cite{dinesh1}          the authors have constructed several  examples where the dynamical behavior of $f$ and $g$ vary greatly from the dynamical behavior of $f\circ g$ and $g\circ f.$ Using approximation theory of entire functions, the authors have shown the existence of entire functions $f$ and $g$ having infinite number of domains satisfying various properties and relating it to their composition. They explored and enlarged all the maximum possible ways of the solution in comparison to the past result worked out.

Recall that $w\in\C$ is a critical value of a transcendental entire function $f$ if there exist some $w_0\in\C$ with $f(w_0)=w$ and $f'(w_0)=0.$ Here $w_0$ is called a critical point of $f.$ The image of a critical point of $f$ is  critical value of $f.$ Also recall that $\zeta\in\C$ is an asymptotic value of a transcendental entire function $f$ if there exist a curve $\Gamma$ tending to infinity such that $f(z)\to \zeta$ as $z\to\ity$ along $\Gamma.$ The set of all critical points, critical values and asymptotic values of $f$ will be denoted by $CP(f), CV(f)$ and $AV(f)$ respectively. Recall the Eremenko-Lyubich class
 \[\mathcal{B}=\{f:\C\to\C\,\,\text{transcendental entire}: \text{Sing}{(f^{-1})}\,\text{is bounded}\},\]
(where Sing$f^{-1}$ is the set of critical values and asymptotic values of $f$ and their finite limit points). Each $f\in\mathcal{B}$ is said to be of bounded type. A transcendental entire function $f$ is of finite type if Sing$f^{-1}$ is a finite set. Furthermore, if the transcendental entire functions $f$ and $g$ are of bounded type then so is $f\circ g$ as Sing $((f\circ g)^{-1})\subset$ Sing $f^{-1}\cup f(\text{Sing}(g^{-1})),$ \cite{berg4}. Singularities of a transcendental map plays an important role in its dynamics. For any transcendental entire function Sing$f^{-1}\neq\emptyset,$ \cite[p.\ 66]{Hua}. It is well known, \cite{el2, keen}, if $f$ is of finite type then it has no wandering domains. Recently Bishop \cite{bishop} has constructed an example of a function of bounded type having a wandering domain. Let $E(f)=\cup_{n\geq 0}f^{n}(\text{Sing}f^{-1})$ and $E'(f)$ be the derived set of $E(f),$ that is, the set of finite limit points of $E(f).$ It is well known  \cite{berg2}, if $U\subset F(f)$ is a wandering domain, then all limit functions of $\{f^n\vert_{U}\}$ are constant and are contained in $(E'(f)\cap J(f))\cup\{\ity\}.$ Furthermore, if  $C$ denotes the class of transcendental entire functions with $J(f)\cap E'(f)=\emptyset,$ and if $f\in\mathcal{B}\cap C,$ then $f$ does not have any wandering domains, \cite[Corollary]{berg2}.

 Here we shall consider the relationship between Fatou sets and singular values of transcendental entire functions $f, g$ and $f\circ g$. Some of the results are motivated by the work in \cite{ap1,ap2}. We shall give various conditions under which Fatou sets of $f$ and $f\circ g$ coincide. We have also considered relation between the singular values of $f, g$ and their compositions. Moreover, the relation between singular values of $f$ and $g$ in terms of conjugating map $\phi:\C\to\C$ has also been investigated. Recall two entire functions $f$ and $g$ are conjugate if there exist a conformal map $\phi:\C\to\C$ with $\phi\circ f=g\circ\phi.$ By a conformal map $\phi:\C\to\C$ we mean an analytic and univalent map of the complex plane $\C.$
\section{Theorems and their proofs}\label{sec2}

\begin{theorem}\label{sec2,thm1}
Let $f$ and $g$ be two permutable transcendental entire functions. Then
\begin{enumerate}
\item [(i)] $F(f\circ g)$ is completely invariant under $f$ and $g$ respectively;
\item [(ii)] $F(f\circ g)\subset F(f)\cap F(g);$
\item [(iii)] For any two positive integers $i$ and $j, F(f^i\circ g^j)=F(f\circ g).$
\end{enumerate}
\end{theorem}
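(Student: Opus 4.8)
The plan is to reduce everything to the two algebraic consequences of permutability and then run a single Montel-type argument for parts (ii) and (iii). First I would record that, since $f\circ g=g\circ f$, an easy induction gives $(f\circ g)^n=f^n\circ g^n$ for all $n$, and that $h:=f\circ g$ commutes with each factor: $h\circ f=f\circ g\circ f=f\circ f\circ g=f\circ h$, and similarly $h\circ g=g\circ h$. Consequently $f^n$ and $g^n$ commute with $h$ as well. These commutation identities are the backbone of the argument.

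For (i) I would prove the general statement that if an entire map $\phi$ commutes with $h$ then $F(h)$ is completely invariant under $\phi$, and then apply it with $\phi=f$ and $\phi=g$. Fix $z_0$ and a neighbourhood on which $\{h^n\}$ is normal. Using $\phi\circ h^n=h^n\circ\phi$ together with the continuity of $\phi$ (to pull a neighbourhood of $\phi(z_0)$ back) and the openness of $\phi$ (to push a neighbourhood of $z_0$ forward), one transfers normality of $\{h^n\}$ between neighbourhoods of $z_0$ and of $\phi(z_0)$, giving $z_0\in F(h)\iff \phi(z_0)\in F(h)$. The hard part will be the escaping case: when a subsequence $h^{n_k}$ tends locally uniformly to $\infty$, postcomposition by the transcendental map $\phi$ need not converge, so normality does not transfer by the naive limit computation. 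This is exactly the delicate point, and I would resolve it by appealing to the relationship between the Fatou sets of $f\circ g$ and $g\circ f$ for composite entire functions, which coincide here by permutability, as developed in \cite{berg4, Poon}, rather than by a bare limit argument.

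Granting (i), part (ii) is clean. The complete invariance of $F(h)$ under $f$ forces $f^{-1}(J(h))=J(h)$, hence $f^{-n}(J(h))=J(h)$ for every $n$. Take any disc $D\subseteq F(h)$. If $f^n(D)$ met $J(h)$ then $D$ would meet $f^{-n}(J(h))=J(h)$, which is impossible; thus $f^n(D)\subseteq F(h)$ for all $n$, so $\bigcup_{n}f^n(D)\subseteq F(h)=\C\setminus J(h)$. Since $J(h)$ is a perfect, hence infinite, set, the family $\{f^n\}$ omits at least two fixed values on $D$ and is therefore normal there by Montel's theorem, giving $D\subseteq F(f)$. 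As $D$ was arbitrary, $F(h)\subseteq F(f)$, and the same argument with $g$ yields $F(h)\subseteq F(g)$, proving (ii).

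For (iii) I would note that $f^i$ and $g^j$ are again permutable and that $f$ and $g$ each commute with $H:=f^i\circ g^j$; hence the argument of (i) shows $F(H)$ is completely invariant under $f$ and $g$ as well. Now I run the Montel argument in both directions. Given a disc $D\subseteq F(h)$, invariance of $F(h)$ under $f$ and $g$ gives $H^n(D)=f^{in}\bigl(g^{jn}(D)\bigr)\subseteq F(h)$ for all $n$, so $\bigcup_n H^n(D)$ omits the infinitely many points of $J(h)$ and $\{H^n\}$ is normal on $D$; thus $F(h)\subseteq F(H)$. Symmetrically, for a disc $D'\subseteq F(H)$, invariance of $F(H)$ under $f,g$ gives $h^n(D')=f^{n}\bigl(g^{n}(D')\bigr)\subseteq F(H)$, whence $\{h^n\}$ is normal on $D'$ and $F(H)\subseteq F(h)$. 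Combining the two inclusions yields $F(f^i\circ g^j)=F(f\circ g)$; the diagonal case $i=j=k$ also drops out immediately from $F(h^k)=F(h)$. The only genuine obstacle throughout is the escaping-limit case in (i); once complete invariance is in hand, parts (ii) and (iii) are uniform consequences of invariance, Montel's theorem, and the infinitude of the Julia set.
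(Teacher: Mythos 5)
Your proposal is correct, and for parts (i) and (ii) it follows essentially the paper's route: complete invariance in (i) is ultimately obtained by citing the Bergweiler--Wang relation $z\in F(f\circ g)\iff g(z)\in F(g\circ f)$ (you rightly identify that the naive commutation-of-normality argument breaks down for escaping subsequences and that the citation is what repairs it), and (ii) is the same invariance-plus-Montel argument, which you merely spell out in more detail than the paper does.

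Where you genuinely diverge is (iii). The paper never re-proves invariance for $H=f^i\circ g^j$; it bootstraps from (ii) and the identity $F(h)=F(h^k)$, $h=f\circ g$, via the sandwich $F(f\circ g)=F(f^i\circ g^i)\subset F(f^i\circ g^j)\subset F(f\circ g)$ (assuming $i\geq j$), where the first inclusion is (ii) applied to the commuting pair $f^i\circ g^j$ and $g^{i-j}$, and the second is (ii) applied to $(f\circ g)^j$ and $f^{i-j}$. Your route instead re-establishes complete invariance of $F(H)$ under $f$ and $g$ and runs the Montel argument in both directions. That works, but one step needs more care than you give it: you justify invariance of $F(H)$ by saying ``$f$ and $g$ commute with $H$, hence the argument of (i) applies.'' The general implication ``$\phi$ commutes with $h$ implies $F(h)$ is completely invariant under $\phi$'' was never proved --- for permutable transcendental entire functions this is essentially an open problem, which is exactly why (i) leans on Bergweiler--Wang. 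What Bergweiler--Wang requires is that the map in question appear as a compositional factor, and here that is available by commutativity: write $H=\bigl(f^{i-1}\circ g^j\bigr)\circ f$ and $f\circ\bigl(f^{i-1}\circ g^j\bigr)=H$, so the relation gives $z\in F(H)\iff f(z)\in F(H)$, and similarly for $g$ using $H=\bigl(f^i\circ g^{j-1}\bigr)\circ g$. With that detail inserted, your (iii) is a valid alternative: the paper's sandwich is slicker in that it needs no new invariance statement, while your version is more uniform, with the single invariance-plus-Montel mechanism driving all three parts.
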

\begin{proof}
\begin{enumerate}
\item [(i)]Bergweiler and Wang \cite{berg4}, showed that $z\in F(f\circ g)$ if and only if $f(z)\in F(g\circ f).$ Since $f\circ g=g\circ f, F(f\circ g)$ is completely invariant under $f$ and by symmetry, under $g$ respectively. 
\item [(ii)] As $F(f\circ g)$ is completely invariant under $f$ and $g$, and so it is forward invariant under them. So $f(F(f\circ g))\subset F(f\circ g)$ and $g(F(f\circ g))\subset F(f\circ g)$ which by Montel's Normality Criterion implies $F(f\circ g)\subset F(f)$ and $F(f\circ g)\subset F(g).$ Hence $F(f\circ g)\subset F(f)\cap F(g).$
\item [(iii)] For $i, j\in\N,$ assume $i\geq j.$ Now $F(f\circ g)=F(f^i\circ g^i)\subset F(f^i\circ g^j)\subset F(f\circ g),$ which implies $F(f^i\circ g^j)=F(f\circ g)$ for all $i, j\in\N.$\qedhere
\end{enumerate}
\end{proof}
We now provide conditions under which $F(f)$ equals $F(f\circ g).$
\begin{theorem}\label{sec2,thm2}
Let $f$ and $g$ be permutable transcendental entire functions. Then the following holds:
\begin{enumerate}
\item [(i)] If $\ity$ is not a limit function of any subsequence of $\{f^n\}$ in a component of $F(f)$, then $F(f)=F(f\circ g);$
\item [(ii)]  If $f$ is of bounded type, then $F(f)=F(f\circ g);$
\item [(iii)] If $g=af+b, a, b\,\, constants, a\neq 0,$ then $F(f)=F(f\circ g)$;
\item [(iv)] If $f$ is periodic of period $c$ and $g=f+c,$ then  $F(f)=F(f\circ g).$
\end{enumerate}
\end{theorem}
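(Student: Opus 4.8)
Throughout, recall that Theorem~\ref{sec2,thm1}(ii) already gives $F(f\circ g)\subset F(f)$, so in every case the real task is the reverse inclusion $F(f)\subset F(f\circ g)$.

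For (i) the plan is to first show that $g$ maps $F(f)$ into itself, and then feed this into Montel's theorem. Fix a component $U$ of $F(f)$, a point $z_0\in U$, and set $w_0=g(z_0)$. Choosing a small disc $D\subset U$ about $z_0$ and using $f^n\circ g=g\circ f^n$ (permutability), for $w=g(v)$ with $v\in D$ one has $f^n(w)=g(f^n(v))$. By hypothesis no subsequence of $\{f^n\}$ tends to $\ity$ on $U$, so along any subsequence $f^{n_k}\to\psi$ locally uniformly with $\psi$ finite; hence the values $f^{n_k}(v)$ stay in a fixed compact set for $v$ in a slightly smaller disc, and therefore $f^{n_k}(w)=g(f^{n_k}(v))$ stays bounded on the open neighbourhood $g(D)$ of $w_0$. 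Montel's theorem then makes $\{f^n\}$ normal at $w_0$, so $g(F(f))\subset F(f)$. Since $F(f)$ is also completely invariant under $f$, it is forward invariant under $h:=f\circ g$, i.e.\ $h^n(F(f))\subset F(f)$ for all $n$. As each $h^n$ then omits the whole of $J(f)$ on $F(f)$, and $J(f)$ is infinite, Montel's criterion shows $\{h^n\}$ is normal on $F(f)$, giving $F(f)\subset F(f\circ g)$ and hence equality.

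The step $g(F(f))\subset F(f)$ is the crux: without control at $\ity$ the composition $g\circ f^{n_k}$ could behave wildly near the essential singularity of $g$, which is precisely why the finiteness of the limit functions is assumed. Part (ii) is then immediate: for $f\in\mathcal B$ one has $I(f)\subset J(f)$ \cite{el2}, so $\ity$ cannot be a limit function of $\{f^n\}$ on any Fatou component, and (i) applies.

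For (iii), writing $\sigma(w)=aw+b$, permutability of $f$ and $g=\sigma\circ f$ forces $f\circ\sigma=\sigma\circ f$ (compare $f\circ\sigma\circ f$ with $\sigma\circ f^2$ and use that $f(\C)$ omits at most one point), whence $f\circ g=\sigma\circ f^2=f^2\circ\sigma$. Because $\sigma$ is a conformal map commuting with $f^2$, it conjugates each of $f^2$ and $f\circ g$ to itself and so preserves their Fatou and Julia sets. Then $(f\circ g)^n=\sigma^n\circ f^{2n}$ maps $F(f^2)=F(f)$ into itself, while $f^{2n}=\sigma^{-n}\circ(f\circ g)^n$ maps $F(f\circ g)$ into itself; in each case the iterates omit the corresponding infinite Julia set, so Montel's theorem yields both inclusions and $F(f\circ g)=F(f^2)=F(f)$. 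Finally, (iv) is a direct computation: since $f$ has period $c$, $f\circ g(z)=f(f(z)+c)=f(f(z))=f^2(z)$, so that $F(f\circ g)=F(f^2)=F(f)$.
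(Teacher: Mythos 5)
Parts (i), (iii) and (iv) of your proposal are essentially fine. In (i) you follow the same strategy as the paper; the only difference is that where the paper simply cites Baker's Lemma 4.3(ii) for the key inclusion $g(F(f))\subset F(f)$, you reprove it inline from permutability ($f^n\circ g=g\circ f^n$) and Montel. Your write-up compresses one step: the bound on $f^{n_k}(v)$ you extract is \emph{per subsequence}, whereas Montel needs local boundedness of the whole family $\{f^n\}$ on $D$; this is repaired by the standard compactness argument (if $\{f^n\}$ were not locally uniformly bounded on $D$, a subsequence realizing the unboundedness would, by normality and the hypothesis, have a sub-subsequence converging locally uniformly to a finite limit, a contradiction). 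In (iii) you take a genuinely different, self-contained route: the paper quotes Baker's result that $g(F(f))\subset F(f)$ when $g=af+b$ and then runs the Montel argument, while you first deduce $f\circ\sigma=\sigma\circ f$ for $\sigma(w)=aw+b$ from permutability and the identity theorem (using that $f(\C)$ omits at most one point) and then exploit conjugation invariance of Fatou sets under the affine map $\sigma$; this works and has the merit of not depending on an external lemma. Part (iv) is the same one-line computation as in the paper.

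Part (ii), however, contains a genuine gap. You claim that $f\in\mathcal{B}$ forces the hypothesis of (i) because Eremenko--Lyubich gives $I(f)\subset J(f)$, where $I(f)=\{z: f^n(z)\to\ity\}$ is the escaping set. But $I(f)\subset J(f)$ only says that no point of $F(f)$ escapes under the \emph{full} sequence of iterates, whereas the hypothesis of (i) demands that $\ity$ is not a limit function of \emph{any subsequence} of $\{f^n\}$ on any Fatou component. These are not equivalent: a bounded-type function can have an \emph{oscillating} wandering domain $U$, on which some subsequence $f^{n_k}\to\ity$ locally uniformly while another subsequence stays bounded, so that $U\cap I(f)=\emptyset$ and Eremenko--Lyubich is not violated. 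Bishop's example, cited in the introduction of this very paper, is exactly such a function: it lies in $\mathcal{B}$ and has a wandering domain, whose orbit is necessarily unbounded but non-escaping, so that $\ity$ \emph{is} a subsequential limit function there (take $|f^{n_k}(z_0)|\to\ity$ and pass to a locally uniformly convergent sub-subsequence; its limit cannot be finite). Hence the implication ``$f\in\mathcal{B}$ implies the hypothesis of (i)'' is false, and your proof of (ii) collapses at that point. The paper's own proof does not attempt this reduction: it invokes the fact that for permutable $f,g$ with $f$ of bounded type one has $g(F(f))\subset F(f)$ directly --- a nontrivial result of semiconjugation type from the literature, not a consequence of $I(f)\subset J(f)$ --- and then concludes with the same Montel argument. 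To repair your argument you would need to prove or cite that inclusion.
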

\begin{proof}
\begin{enumerate}
\item [(i)]Using Baker's result \cite[Lemma 4.3\,(ii)]{baker3}, we have $g(F(f))\subset F(f).$ Also $F(f)$ is forward invariant under $f$ and hence $F(f)$ is forward invariant under $f\circ g$, which by Montel's Normality Criterion implies $F(f)\subset F(f\circ g).$ Using Theorem \ref{sec2,thm1}(ii), we get the desired result.
\item [(ii)] As $f$ is of bounded type, $g(F(f))\subset F(f),$ which implies $F(f)\subset F(g)$ by Montel's Normality Criterion. Thus $F(f)$ is forward invariant under $f, g$ and hence under $f\circ g.$ Again by Montel's Normality Criterion $F(f)\subset F(f\circ g).$ From Theorem \ref{sec2,thm1}(ii), $F(f\circ g)\subset F(f)$ and hence we have the desired result.
\item [(iii)] In Baker \cite{baker3}, it was shown that $g(F(f))\subset F(f)$ and using \ref{sec2,thm2}(i), we obtain the desired result.
\item [(iv)] Observe that $f\circ g(z)=f^2(z)$ and hence $F(f\circ g)=F(f).$\qedhere
\end{enumerate}
\end{proof}

We now show if the composite entire function and its right factor have bounded set of asymptotic values, then so does the left factor.
\begin{theorem}\label{sec2,thm4}
Let $f$ and $g$ be transcendental entire functions such that the asymptotic values of $f\circ g$ and $g$ are bounded. Then the asymptotic values of $f$ are also bounded.
\end{theorem}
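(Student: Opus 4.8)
The plan is to prove the sharper inclusion $AV(f)\subseteq AV(f\circ g)$; since $AV(f\circ g)$ is bounded by hypothesis, the boundedness of $AV(f)$ will then follow immediately. So let $\zeta\in AV(f)$ and let $\gamma\colon[0,\ity)\to\C$ be an asymptotic path for it, that is, $\gamma(t)\to\ity$ and $f(\gamma(t))\to\zeta$ as $t\to\ity$. Let $M<\ity$ be a bound for the set $AV(g)$, so that $|w|\le M$ for every $w\in AV(g)$, and choose $S\ge 0$ with $|\gamma(t)|>M$ for all $t>S$. The idea is to lift the tail $\gamma|_{[S,\ity)}$ through $g$ to a curve tending to $\ity$; such a lift will display $\zeta$ as an asymptotic value of $f\circ g$.

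Since $\gamma([S,\ity))$ is connected and unbounded, it is uncountable, so I may pick $t_1>S$ with $\gamma(t_1)$ lying outside the (at most countable) critical value set $CV(g)$ and different from the possible Picard omitted value of $g$. Then $g^{-1}(\gamma(t_1))$ is nonempty and consists of non-critical points; I fix $q_1$ in it and let $\Ga\colon[t_1,T_+)\to\C$ be a maximal forward lift of $\gamma$ under $g$ with $\Ga(t_1)=q_1$, so that $g(\Ga(t))=\gamma(t)$. Because $g$ is entire, this lift exists and continues as long as $\Ga$ stays in a compact set: near any finite cluster point $q^{*}$ of $\Ga(t)$ the local degree of $g$ is finite and the path can be pushed through $q^{*}$, even when $q^{*}$ is a critical point. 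Consequently, if $T_+<\ity$ then $\Ga(t)\to\ity$ as $t\to T_+$, since a finite cluster point would allow the lift to be extended beyond $T_+$, contradicting maximality.

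Suppose $T_+<\ity$. Then $\Ga(t)\to\ity$ while $g(\Ga(t))=\gamma(t)\to\gamma(T_+)$, so $\gamma(T_+)$ is an asymptotic value of $g$ and hence $|\gamma(T_+)|\le M$; but $T_+>t_1>S$ gives $|\gamma(T_+)|>M$, a contradiction. Therefore $T_+=\ity$, and $\Ga$ is defined on all of $[t_1,\ity)$. As $g(\Ga(t))=\gamma(t)\to\ity$ and $g$ takes only finite values, $\Ga(t)$ can have no bounded subsequence, so $\Ga(t)\to\ity$. Since $(f\circ g)(\Ga(t))=f(\gamma(t))\to\zeta$, we conclude $\zeta\in AV(f\circ g)$. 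This establishes $AV(f)\subseteq AV(f\circ g)$, and the boundedness of $AV(f)$ follows.

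I expect the only real obstacle to lie in making the lifting step rigorous: one must verify that the forward lift exists, that it continues through the critical points of $g$ that it may encounter, and that its failure to extend over a finite parameter interval forces $\Ga(t)\to\ity$. It is exactly this dichotomy — either the lift runs out to $\ity$ in parameter and produces the desired asymptotic value of $f\circ g$, or it escapes in finite parameter and produces an asymptotic value of $g$ of modulus exceeding $M$ — that lets the hypothesis on $AV(g)$ force the favourable alternative.
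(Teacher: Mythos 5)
Your proof is correct, and it rests on the same underlying idea as the paper's proof --- pull the asymptotic path of $f$ back through $g$ and show the lifted curve tends to $\infty$ --- but your execution differs in a way worth recording: it is rigorous where the paper's is not, and it is the only one of the two that actually uses the hypothesis that $AV(g)$ is bounded. The paper's proof simply posits ``an analytic branch $\Gamma_n'$ of $g^{-1}(\Gamma_n)$'' defined along the whole curve, and argues only that such a complete lift cannot end at a finite point (else $g$ would have a pole or essential singularity there). This ignores the real obstruction, namely that a maximal lift may escape to $\infty$ at a finite parameter value $T_+$; in that case one only learns that $\gamma(T_+)$ is an asymptotic value of $g$, and nothing about $AV(f\circ g)$. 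That is exactly the case your dichotomy handles: since $|\gamma(t)|>M$ beyond the starting parameter while every asymptotic value of $g$ has modulus at most $M$, finite-time escape is impossible, so the lift exists for all parameters and tends to $\infty$, giving $\zeta\in AV(f\circ g)$. Tellingly, the paper's proof never invokes the boundedness of $AV(g)$ at all, and the same unjustified lifting step reappears in its proof of Theorem \ref{sec2,thm8}(i), the unconditional inclusion $AV(f)\subset AV(f\circ g)$, which together with Corollary \ref{sec2,cor2} would make the hypothesis on $AV(g)$ in the present theorem superfluous; your argument shows the inclusion can be secured once $AV(g)$ is bounded. The one point in your write-up that deserves expansion is the claim that a finite cluster point of $\Gamma(t)$ as $t\to T_+$ permits continuation of the lift: this requires the local normal form $g(z)=g(q^{*})+\varphi(z)^{k}$ near the cluster point together with a properness argument showing the lift eventually stays in the corresponding branched-covering neighborhood and converges --- a standard but not completely trivial fact about path lifting under open, discrete maps, and worth a few explicit lines.
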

\begin{proof}
Suppose the asymptotic values of $f$ are unbounded. Then  there exist a sequence $\{z_n\}\subset AV(f)$ such that for each $n\in\N,$ $|z_n|>n$. Corresponding to each $z_n$ there exist a curve $\Gamma_n$ tending to $\ity,$ with $f(z)\to z_n$ as $z\to\ity$ along $\Gamma_n.$ Let $\Gamma_n'$ be an analytic branch of $g^{-1}(\Gamma_n).$ Then $\Gamma_n'$ must tend to $\ity$, for if $\Gamma_n'$ tends to some finite limit say $z_0,$ then $z_0$ must either be a pole or an essential singularity of $g$ which contradicts to $g$ being an entire function. Now as $z\to\ity$ on $\Gamma_n', (f\circ g)(z)\to z_n$ which contradicts the hypothesis that $AV(f\circ g)$ is bounded and hence the result.
\end{proof}
We now study  relation between singular values of two conjugate entire functions:
\begin{theorem}\label{sec2,thm5}
Let $f$ and $g$  be transcendental entire functions which are conjugate under a conformal map $\phi:\C\to\C,$ that is, $\phi\circ f=g\circ\phi,$ then
\begin{enumerate}
\item [(i)] $AV(g)=\phi(AV(f));$
\item [(ii)] $\phi(CP(f))\subset CP(g);$
\item [(iii)] $\phi(CV(f))\subset CV(g);$
\item [(iv)] $CP(f)\subset CP(\phi\circ f);$
\item [(v)] If $\phi$ maps $CP(g)$ inside $CP(g),$ then $CP(g)\subset CP(f).$
\end{enumerate}
\end{theorem}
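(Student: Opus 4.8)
The plan rests on one structural observation and one computation. The observation is that a conformal (analytic and univalent) self-map of $\C$ is necessarily affine, $\phi(z)=az+b$ with $a\neq0$; this is the standard fact that the only injective entire functions are the degree-one polynomials. Consequently $\phi'\equiv a$ is a nonvanishing constant, $\phi$ is a bijection whose inverse is again a conformal self-map of $\C$, and $\phi$ carries every curve tending to $\ity$ to a curve tending to $\ity$. The computation is the chain rule applied to $\phi\circ f=g\circ\phi$, giving
\[
\phi'(f(z))\,f'(z)=g'(\phi(z))\,\phi'(z).
\]
Whenever I establish an inclusion I will also exploit the symmetric conjugacy $\phi^{-1}\circ g=f\circ\phi^{-1}$.

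For (i) I would argue as follows. If $\zeta\in AV(f)$ along a curve $\Gamma\to\ity$, then $\phi(\Gamma)$ again tends to $\ity$, and writing $w=\phi(z)$ on $\phi(\Gamma)$ gives $g(w)=\phi(f(z))\to\phi(\zeta)$ by continuity; hence $\phi(\zeta)\in AV(g)$, so $\phi(AV(f))\subset AV(g)$. Applying the same reasoning to $\phi^{-1}\circ g=f\circ\phi^{-1}$ yields $\phi^{-1}(AV(g))\subset AV(f)$, i.e.\ $AV(g)\subset\phi(AV(f))$, and the two inclusions give equality.

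Parts (ii)--(iv) should follow directly from the differentiated relation. If $w_0\in CP(f)$, i.e.\ $f'(w_0)=0$, then evaluating at $w_0$ gives $g'(\phi(w_0))\,\phi'(w_0)=0$, and since $\phi'(w_0)=a\neq0$ we conclude $g'(\phi(w_0))=0$, so $\phi(w_0)\in CP(g)$; this is (ii). For (iii), a critical value $f(w_0)$ satisfies $\phi(f(w_0))=g(\phi(w_0))$ with $\phi(w_0)\in CP(g)$ by (ii), so $\phi(f(w_0))\in CV(g)$. For (iv), $(\phi\circ f)'(z)=\phi'(f(z))\,f'(z)$ vanishes wherever $f'(z)=0$, giving $CP(f)\subset CP(\phi\circ f)$.

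The part that genuinely needs the extra hypothesis, and which I expect to be the crux, is (v). Take $w_0\in CP(g)$; the goal is $f'(w_0)=0$. Evaluating the differentiated conjugacy at $z=w_0$ gives $\phi'(f(w_0))\,f'(w_0)=g'(\phi(w_0))\,\phi'(w_0)$, and because $\phi'$ never vanishes this forces $f'(w_0)=0$ precisely when $g'(\phi(w_0))=0$. The hypothesis that $\phi$ maps $CP(g)$ into $CP(g)$ supplies exactly this: $w_0\in CP(g)$ gives $\phi(w_0)\in CP(g)$, hence $g'(\phi(w_0))=0$, and therefore $w_0\in CP(f)$. The obstacle is that without this hypothesis nothing controls $g'(\phi(w_0))$, so the differentiated relation yields no information about $f'(w_0)$; the assumption is tailored to close exactly this gap.
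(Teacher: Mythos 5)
Your proposal is correct, and for parts (ii)--(v) it coincides with the paper's own argument: the same chain-rule identity $\phi'(f(z))f'(z)=g'(\phi(z))\phi'(z)$, the same use of the nonvanishing of $\phi'$, and in (v) the same way of using the hypothesis $\phi(CP(g))\subset CP(g)$ to force $f'(w_0)=0$. The only genuine divergence is in (i). The paper pulls the defining curve back: given $z_0\in AV(g)$ along a curve $\Gamma$, it notes $\phi^{-1}(z)\to\ity$, concludes $z_0\in AV(\phi\circ f)$, and then invokes the inclusion $AV(h\circ k)\subset AV(h)\cup h(AV(k))$ from \cite{berg4} together with $AV(\phi)=\emptyset$ to land in $\phi(AV(f))$; the reverse inclusion is handled similarly via $AV(g\circ\phi)\subset AV(g)\cup g(AV(\phi))$. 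You instead push the curve forward under $\phi$ (using that $\phi$, being affine, is proper, so $\phi(\Gamma)\to\ity$) to get $\phi(AV(f))\subset AV(g)$ directly, and then obtain the opposite inclusion by applying the identical argument to the inverse conjugacy $\phi^{-1}\circ g=f\circ\phi^{-1}$. Your route is more self-contained: it needs no external result on singular values of compositions, only the standard fact (which you rightly isolate) that a univalent entire self-map of $\C$ is affine. The paper uses that fact implicitly in any case ($AV(\phi)=\emptyset$, $\phi'$ nonvanishing, $\phi^{-1}(z)\to\ity$), so nothing is lost in your version, and the symmetry trick via $\phi^{-1}$ makes the two inclusions of (i) visibly one argument rather than two.
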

\begin{proof}
\begin{enumerate}
\item [(i)] We first show $AV(g)\subset \phi(AV(f)).$ Let $z_0\in AV(g).$ Then there exist a curve $\Gamma$ tending to $\ity$ such that $g(z)\to z_0$ on $\Gamma,$ that is, $\phi\circ f\circ\phi^{-1}(z)\to z_0$ on $\Gamma.$ As $z\to\ity$ on $\Gamma,  \phi^{-1}(z)$ also tends to $\ity$ on $\Gamma$ which implies $z_0$ is an asymptotic value of $\phi\circ f.$ Thus $z_0\in AV(\phi\circ f)\subset AV(\phi)\cup\phi(AV(f))$, \cite{berg4}. As $AV(\phi)=\emptyset,$ we have $AV(g)\subset\phi(AV(f)).$ For the backward implication let $w_0\in AV(f).$ Then there exist a curve $\Gamma'$ tending to $\ity$ such that $f(z)\to w_0$ on $\Gamma'.$ As $\phi$ is continuous, $\phi\circ f(z)\to\phi(w_0)$ on $\Gamma'$ which implies $\phi(w_0)$ is an asymptotic value of $\phi\circ f=g\circ\phi.$ Hence  $\phi(AV(f))\subset AV(g\circ\phi)\subset AV(g)\cup g(AV(\phi))$ and as $AV(\phi)=\emptyset,$ we have $\phi(AV(f))\subset AV(g)$ and hence the result.
\item [(ii)] Let $w\in\phi(CP(f)),$ then $w=\phi(z_0),$ for some $z_0\in CP(f),$   $f'(z_0)=0.$ Now $(g\circ\phi)'(z_0)=(\phi\circ f)'(z_0)$ and since $\phi'(z_0)\neq 0$ we have $\phi(z_0)\in CP(g)$, that is, $w\in CP(g)$ and hence $\phi(CP(f))\subset CP(g).$
\item [(iii)] As shown above we have, $\phi(z_0)\in CP(g)$ and so $g\circ\phi(z_0)\in CV(g),$ that is,  $\phi\circ f(z_0)\in CV(g).$ As $z_0\in CP(f)$ we have  $f(z_0)\in CV(f)$ and hence $\phi(CV(f))\subset CV(g)$.
\item [(iv)] If $z_0\in CP(f),$ we have $f'(z_0)=0.$ Then $(\phi\circ f)'(z_0)=\phi'(f(z_0))f'(z_0)=0$ which implies $z_0\in CP(\phi\circ f)$ and hence the result.
\item [(v)] Let $w_0\in CP(g).$ Then by hypothesis $\phi(w_0)\in CP(g).$ Now $0=(g\circ\phi)'(w_0)=(\phi\circ f)'(w_0)=\phi'(f(w_0))f'(w_0)$ and since $\phi'(f(w_0))\neq 0,$ we have $f'(w_0)=0$ which implies the result.\qedhere
\end{enumerate}
\end{proof}
An immediate consequence of above theorem is 
\begin{corollary}\label{sec2,cor1}
Let $f, g$ and $\phi$ be as in the previous theorem. If $AV(f)$ is bounded, then so is $AV(g).$
\end{corollary}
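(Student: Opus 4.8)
The plan is to derive the statement directly from Theorem \ref{sec2,thm5}(i), which under the standing hypothesis $\phi\circ f=g\circ\phi$ furnishes the exact identity $AV(g)=\phi(AV(f))$. Thus the entire content of the corollary collapses to a single observation: the conformal map $\phi$ must carry the bounded set $AV(f)$ to a bounded set. Once that is checked, the conclusion is immediate.

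To establish the preservation of boundedness, I would argue by compactness. Since $AV(f)$ is assumed bounded, its closure $\overline{AV(f)}$ is a compact subset of $\C$. The map $\phi$, being conformal on all of $\C$, is in particular continuous, so $\phi(\overline{AV(f)})$ is the continuous image of a compact set, hence compact and in particular bounded. Because $AV(f)\subseteq\overline{AV(f)}$ we get $\phi(AV(f))\subseteq\phi(\overline{AV(f)})$, so $\phi(AV(f))$ is bounded. Combining this with the identity $AV(g)=\phi(AV(f))$ from Theorem \ref{sec2,thm5}(i) shows that $AV(g)$ is bounded, as claimed.

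I expect no genuine obstacle here; the result is essentially immediate once part (i) of the preceding theorem is in hand, and indeed this is why it is stated as a corollary rather than a theorem. The only point that deserves a word is the transfer of boundedness through $\phi$, and even that can be handled in more than one way. If one prefers to avoid the appeal to compactness, one may instead recall that a conformal—that is, analytic and univalent—self-map of $\C$ is necessarily affine, say $\phi(z)=az+b$ with $a\neq 0$; such a map is globally Lipschitz and so manifestly sends bounded sets to bounded sets, yielding the conclusion just as directly.
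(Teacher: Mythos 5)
Your proof is correct and follows exactly the route the paper intends: the paper states this corollary as an immediate consequence of Theorem \ref{sec2,thm5}(i), namely applying the identity $AV(g)=\phi(AV(f))$ and noting that $\phi$ carries bounded sets to bounded sets. Both of your justifications for that last step (compactness of the closure under the continuous map $\phi$, or the fact that a conformal self-map of $\C$ is affine) are valid, so there is nothing to add.
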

\begin{theorem}\label{sec2,thm6}
Let $f$ and $g$ be transcendental entire functions. Then
\begin{enumerate}
\item\ $f(AV(g))\subset AV(f\circ g);$
\item\ $g(CP(f\circ g))\subset CP(f)\cup CV(g).$
\end{enumerate}
\end{theorem}
\begin{proof}
\begin{enumerate}
\item\ Let $z_0\in AV(g),$ then there exist a curve $\Gamma$ tending to $\ity$ with $g(z)\to z_0$ as $z\to\ity$ on $\Gamma$ and so $f\circ g(z)\to f(z_0)$ on $\Gamma$ 
which implies $f(z_0)\in AV(f\circ g)$ and hence $f(AV(g))\subset AV(f\circ g).$
\item\ Let $w\in g(CP(f\circ g)),$ then $w=g(z_0),$ for some $z_0\in CP(f\circ g).$ Now $0=(f\circ g)'(z_0)=f'(g(z_0))g'(z_0),$ which implies either $g(z_0)\in CP(f)$ or $z_0\in CP(g)$ and so $g(z_0)\in CP(f)\cup CV(g)).$ Hence $g(CP(f\circ g))\subset CP(f)\cup CV(g).$\qedhere
\end{enumerate}
\end{proof}
In the next result it is shown that if the critical points of composite entire function are bounded, then so is that of its right and left factors.
\begin{theorem}\label{sec2,thm7}
Let $f$ and $g$ be  transcendental entire functions such that $CP(g\circ f)$ is bounded. Then $CP(f)$ is also bounded and $g$ can have atmost one critical point which is a Picard exceptional value of $f.$
\end{theorem}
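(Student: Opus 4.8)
The plan is to read off the critical points of $g\circ f$ from the chain rule. First I would compute $(g\circ f)'(z)=g'(f(z))\,f'(z)$, so that $z_0\in CP(g\circ f)$ holds precisely when $f'(z_0)=0$ or $g'(f(z_0))=0$. This yields the decomposition
\[
CP(g\circ f)=CP(f)\cup f^{-1}(CP(g)).
\]
Since $f'(z_0)=0$ forces $(g\circ f)'(z_0)=0$, we have $CP(f)\subset CP(g\circ f)$, and as the right-hand side is bounded by hypothesis, boundedness of $CP(f)$ is immediate. This settles the first assertion.

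For the second assertion I would exploit the other half of the decomposition, namely that $f^{-1}(CP(g))\subset CP(g\circ f)$ is also bounded. The key step is to convert this into a statement about Picard exceptional values. Fix a critical point $w\in CP(g)$. The preimages $f^{-1}(w)$ are zeros of the entire function $f-w$, hence isolated with no finite accumulation point; consequently $f^{-1}(w)$ is either finite or unbounded, and a bounded preimage set is necessarily finite. Now suppose $w$ were \emph{not} a Picard exceptional value of $f$. Then by the great Picard theorem applied at the essential singularity $\ity$, the function $f$ would attain $w$ infinitely often in every neighbourhood $\{|z|>R\}$ of infinity, so that $f^{-1}(w)$ would be unbounded. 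But $f^{-1}(w)\subset f^{-1}(CP(g))$ is bounded, a contradiction. Hence every critical point of $g$ must be a Picard exceptional value of $f$.

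To finish, I would invoke the fact that a transcendental entire function has at most one Picard exceptional value. Combined with the previous step, this forces $g$ to have at most one critical point, and whenever such a point exists it coincides with the exceptional value of $f$. I expect the only delicate point to be the invocation of the great Picard theorem: one must ensure that a non-exceptional value is attained arbitrarily far out, so that $f^{-1}(w)$ is genuinely \emph{unbounded} rather than merely infinite, since it is unboundedness that clashes with the bounded set $CP(g\circ f)$. The chain-rule bookkeeping and the boundedness of $CP(f)$ are routine; the Picard-theoretic dichotomy between bounded (finite) and unbounded preimage sets carries the real content.
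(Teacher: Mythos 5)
Your proof is correct and takes essentially the same approach as the paper's: the chain-rule decomposition $CP(g\circ f)=CP(f)\cup f^{-1}(CP(g))$, the observation that a bounded set of isolated zeros must be finite, and the fact that a transcendental entire function has at most one Picard exceptional value. The only cosmetic difference is that you run the isolation argument on the zeros of $f-w$ together with an explicit appeal to the great Picard theorem, whereas the paper applies Bolzano--Weierstrass to the preimages viewed as zeros of $(g\circ f)'$; the content is identical.
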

\begin{proof}
If $z_0\in CP(f),$ then $f'(z_0)=0.$ Now $(g\circ f)'(z_0)=g'(f(z_0))f'(z_0)=0,$ and so $z_0\in CP(g\circ f)$ which implies $CP(f)\subset CP(g\circ f)$ and hence $CP(f)$ is bounded. Now suppose $w\in CP(g)$  is not a Picard exceptional value for $f.$ Then there exist infinitely many $w_n$ such that $f(w_n)=w.$ Now $(g\circ f)'(w_n)=g'(f(w_n))f'(w_n)=g'(w)f'(w_n)=0,$ and so $w_n\in CP(g\circ f)$. Thus $\{w_n\}\subset CP(g\circ f)$ is an infinite bounded set and hence has a limit point by Bolzano Weierstrass Theorem, which contradicts the fact that zero set of an analytic function are isolated. Hence $w$ is a Picard exceptional value for $f$ and the result follows.
\end{proof}
We next show if the critical points of composite entire function are bounded, then so is that of its right factor and moreover the left factor can have atmost one critical value.
\begin{theorem}\label{sec2,thm9}
Let $f$ and $g$ be transcendental entire functions such that critical points of $f\circ g$ are bounded. Then critical values of $g$ are also bounded. Moreover if $f$ permutes with $f'$ and $0$ is a fixed point of $f,$ then $f$ can have atmost one critical value which is a Picard exceptional value for $g.$
\end{theorem}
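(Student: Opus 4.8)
The plan is to mirror the two-part structure of Theorem~\ref{sec2,thm7}, with the roles of the two factors interchanged, and to isolate where the new hypotheses on $f$ are really used. For the first assertion I would begin with the inclusion $CP(g)\subset CP(f\circ g)$: if $z_0\in CP(g)$ then $g'(z_0)=0$, so by the chain rule $(f\circ g)'(z_0)=f'(g(z_0))\,g'(z_0)=0$, whence $z_0\in CP(f\circ g)$. Since $CP(f\circ g)$ is bounded by hypothesis, $CP(g)$ is a bounded set; being a set of isolated zeros of the non-constant entire function $g'$, it is therefore finite, and hence $CV(g)=g(CP(g))$ is a finite set and in particular bounded. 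This gives the first claim with essentially the same bookkeeping as in Theorem~\ref{sec2,thm7}.

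The second assertion is where the extra hypotheses enter, and the crucial observation—which I expect to be the main point of the argument—is that permutability of $f$ with $f'$ together with $f(0)=0$ forces every critical value of $f$ to be itself a critical point of $f$. Concretely, if $c\in CP(f)$ then $f'(c)=0$, and evaluating the identity $f(f'(z))=f'(f(z))$ at $z=c$ yields $f'(f(c))=f(f'(c))=f(0)=0$. Hence $f(c)\in CP(f)$, so that $CV(f)\subset CP(f)$ and in particular $f'(v)=0$ for every $v\in CV(f)$. The permutability relation is exactly the device that transfers the vanishing of $f'$ from the critical point to the critical value, and establishing this inclusion cleanly is the delicate step; once it is in place the rest is routine.

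With this in hand I would argue by contradiction, following the template of Theorem~\ref{sec2,thm7}. Suppose some $v\in CV(f)$ is \emph{not} a Picard exceptional value for $g$; then $g$ assumes the value $v$ infinitely often, say $g(z_n)=v$ for infinitely many distinct points $z_n$. For each such $z_n$,
\[(f\circ g)'(z_n)=f'(g(z_n))\,g'(z_n)=f'(v)\,g'(z_n)=0,\]
since $f'(v)=0$ by the previous step. Thus $\{z_n\}\subset CP(f\circ g)$ is an infinite bounded set, so by the Bolzano--Weierstrass theorem it has a finite limit point, contradicting the fact that the zeros of the non-constant entire function $(f\circ g)'$ are isolated. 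Therefore every critical value of $f$ must be a Picard exceptional value for $g$; since a transcendental entire function has at most one Picard exceptional value, $f$ can have at most one critical value, and that value is a Picard exceptional value for $g$, as required.
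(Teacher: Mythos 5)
Your proposal is correct and takes essentially the same approach as the paper: the first part rests on the same chain-rule observation placing critical points inside $CP(f\circ g)$, and the second part is the same Bolzano--Weierstrass contradiction, with the permutability identity $f'\circ f=f\circ f'$ evaluated at a critical point $z'$ (so that $f'(f(z'))=f(f'(z'))=f(0)=0$) doing exactly the work you describe. Your only departures are cosmetic: you deduce boundedness of $CV(g)$ from finiteness of the bounded discrete set $CP(g)$ rather than from boundedness of $g(CP(f\circ g))$, and you isolate the inclusion $CV(f)\subset CP(f)$ as a standalone lemma where the paper runs the same computation inline.
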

\begin{proof}
Let $w_0\in CV(g).$ Then $w_0=g(z_0)$ for some $z_0\in CP(g), g'(z_0)=0.$ Now $(f\circ g)'(z_0)=f'(g(z_0))g'(z_0)=0,$ and so $z_0\in CP(f\circ g),$ which implies  $g(z_0)\in g(CP(f\circ g)),$ that is, $w_0\in g(CP(f\circ g)).$ As $CP(f\circ g)$ is bounded, $g(CP(f\circ g))$ is also bounded and hence $CV(g)$ is bounded.
Now let $w\in CV(f)$ and suppose $w$ is not a Picard exceptional value for $g.$ Then there are infinitely many $\{w_n\}$ such that $g(w_n)=w.$ We have $w=f(z')$ for some $z'\in CP(f), f'(z')=0.$ Thus $(f\circ g)'(w_n)=f'(g(w_n))g'(w_n)=f'(f(z'))g'(w_n),$ and since $f'\circ f=f\circ f'$ we obtain $(f\circ g)'(w_n)=0$ which implies $w_n\in CP(f\circ g).$ Thus $\{w_n\}\subset CP(f\circ g)$ is an infinite bounded set and hence has a limit point by Bolzano Weierstrass Theorem, which contradicts the fact that zero set of an analytic function are isolated. Hence $w$ has to be a Picard exceptional value for $g$. Since Picard exceptional value for a transcendental entire function can be atmost one, $f$ can have atmost one critical value. 
\end{proof}
The next theorem deals with critical values and asymptotic values of entire functions and their compositions.
\begin{theorem}\label{sec2,thm8}
Let $f$ and $g$ be transcendental entire functions. Then 
\begin{enumerate}
\item [(i)] $AV(f)\subset AV(f\circ g);$
\item [(ii)] $g(CV(f))\subset CV(g\circ f).$
\end{enumerate}
\end{theorem}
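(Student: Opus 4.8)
The plan is to treat the two parts separately, since (i) is an asymptotic-value statement established by lifting the defining curve, while (ii) reduces to an immediate chain-rule computation.

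For part (i), I would mirror the curve-lifting argument already used in the proof of Theorem \ref{sec2,thm4}. Let $\zeta\in AV(f)$, so that there is a curve $\Gamma$ tending to $\ity$ with $f(z)\to\zeta$ as $z\to\ity$ along $\Gamma$. First I would choose an analytic branch $\Gamma'$ of $g^{-1}(\Gamma)$, so that $g$ maps $\Gamma'$ onto a subcurve of $\Gamma$. The key step is to verify that $\Gamma'$ tends to $\ity$: if instead $\Gamma'$ possessed a finite limit point $z_0$, then along a sequence on $\Gamma'$ approaching $z_0$ we would have $g(z)\to g(z_0)$ finite by continuity of the entire function $g$, whereas $g(z)$ runs out to $\ity$ along $\Gamma$, a contradiction. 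Hence $\Gamma'$ has no finite limit point and must tend to $\ity$. Parametrizing so that $z\to\ity$ along $\Gamma'$ forces $g(z)\to\ity$ along $\Gamma$, we then obtain $(f\circ g)(z)=f(g(z))\to\zeta$, which shows $\zeta\in AV(f\circ g)$ and so $AV(f)\subset AV(f\circ g)$.

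For part (ii), let $w\in CV(f)$, so that $w=f(z_0)$ for some $z_0\in CP(f)$ with $f'(z_0)=0$. By the chain rule, $(g\circ f)'(z_0)=g'(f(z_0))f'(z_0)=0$, so $z_0\in CP(g\circ f)$, and therefore $(g\circ f)(z_0)=g(f(z_0))=g(w)\in CV(g\circ f)$. Since $w\in CV(f)$ was arbitrary, this yields $g(CV(f))\subset CV(g\circ f)$.

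The chain-rule manipulation in (ii) is entirely routine and presents no difficulty. I expect the only genuine obstacle to lie in (i), namely securing the existence of the branch $\Gamma'$ and establishing that it escapes to $\ity$ rather than accumulating at a finite point; this is exactly the topological lifting issue already resolved in the proof of Theorem \ref{sec2,thm4}, on which I would model this step.
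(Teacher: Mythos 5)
Your proposal is correct and takes essentially the same approach as the paper: part (i) lifts the asymptotic curve through an analytic branch of $g^{-1}(\Gamma)$ and shows the lifted curve must tend to $\ity$ (the paper phrases the contradiction as $z_0$ being a pole or essential singularity of the entire function $g$, while you argue via continuity of $g$ at $z_0$ against $g(z)\to\ity$ along $\Gamma$ — the same obstruction), and part (ii) is the identical chain-rule computation. No gaps beyond those already present in the paper's own treatment of the curve lift.
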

\begin{proof}
\begin{enumerate}
\item [(i)] Let $w\in AV(f).$ Then there exist a curve $\Gamma$ tending to $\ity$ such that $f(z)\to w$ as $z\to\ity$ on $\Gamma.$ Let $\Gamma'$ be an analytic branch of $g^{-1}(\Gamma).$ Then $\Gamma'$ must tend to $\ity,$ for if $\Gamma'$ tends to a finite limit $z_0$ say, then $z_0$ is either a pole or an essential singularity of $g$ which is impossible. 
Thus as $z\to\ity$ on this curve  $\Gamma',\, f\circ g(z)\to w,$ 
which implies $w\in AV(f\circ g)$ and hence $AV(f)\subset AV(f\circ g).$
\item [(ii)] Let $w_0\in CV(f).$ Then $w_0=f(z_0),$ for some $z_0\in CP(f).$ Now $(g\circ f)'(z_0)=g'(f(z_0))f'(z_0)=0,$ and so $z_0\in CP(g\circ f)$ which implies $g\circ f(z_0)\in CV(g\circ f),$ that is,  $g(w_0)\in CV(g\circ f)$ and hence the result.\qedhere
\end{enumerate}
\end{proof}
An immediate consequence of Theorem \ref{sec2,thm8}(i) is
\begin{corollary}\label{sec2,cor2}
If $AV(f\circ g)$ is bounded, then so is $AV(f).$
\end{corollary}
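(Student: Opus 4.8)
The plan is to read the conclusion off directly from the set inclusion proved in Theorem \ref{sec2,thm8}(i), so essentially no new work is required. First I would invoke that theorem to obtain $AV(f)\subset AV(f\circ g)$. Then, since boundedness of a set is inherited by every subset, the hypothesis that $AV(f\circ g)$ is bounded—say contained in some disc $\{z\in\C:|z|<R\}$—immediately forces $AV(f)$ to lie in the same disc, whence $AV(f)$ is bounded. This is the entire argument: the containment does all the work, and the passage from it to the corollary is a one-line observation about subsets of bounded sets.

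For completeness I would indicate where the actual content sits, namely inside the inclusion $AV(f)\subset AV(f\circ g)$ itself. The idea there is to take $w\in AV(f)$ together with an asymptotic curve $\Gamma$ along which $f(z)\to w$, and to lift $\Gamma$ through $g$ by selecting an analytic branch $\Gamma'$ of $g^{-1}(\Gamma)$. The one point that needs care is that $\Gamma'$ tends to $\ity$: were it to accumulate at a finite point $z_0$, that $z_0$ would have to be a pole or an essential singularity of $g$, contradicting that $g$ is entire. Granting this, one has $f\circ g(z)\to w$ as $z\to\ity$ along $\Gamma'$, so $w\in AV(f\circ g)$. Consequently the corollary presents no real obstacle; the only genuinely non-trivial step, the divergence of the lifted branch, has already been dispatched in the proof of Theorem \ref{sec2,thm8}(i).
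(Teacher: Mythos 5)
Your argument is exactly the paper's: the corollary is stated there as an immediate consequence of Theorem \ref{sec2,thm8}(i), with the inclusion $AV(f)\subset AV(f\circ g)$ doing all the work and boundedness passing to subsets. Your recap of the curve-lifting argument behind that inclusion also matches the paper's proof of Theorem \ref{sec2,thm8}(i), so the proposal is correct and nothing is missing.
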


\end{document}